%% file: paper4.tex
\documentclass[reqno]{amsart}
\usepackage{amsaddr}
\usepackage[utf8]{inputenc}
\usepackage[T1]{fontenc}
\usepackage{amsmath, amsthm, amssymb, xcolor, graphicx, lineno}
\usepackage[colorlinks=true, allcolors=blue]{hyperref}

\newtheorem{theorem}{Theorem}
\newtheorem{lemma}{Lemma}
\newtheorem{corollary}{Corollary}
\newtheorem{remark}{Remark}
\newtheorem{definition}{Definition}
\newcommand{\norm}[1]{\left\lVert#1\right\rVert}

\title[DNWR for heterogeneous heat]{Dirichlet--Neumann waveform relaxation for heterogeneous heat equations: fully discrete $l^2$ analysis}
\author[P. Birken, N. Kotarsky]{Philipp Birken$^{{\lowercase\mathrm{a}}}$, Niklas Kotarsky$^{{\lowercase\mathrm{a}}}$}

\address[Lund]{
	$\,^{\lowercase\mathrm{a}}$ Centre for Mathematical Sciences, 
	Lund University\\ 
	Box 188, 22100 Lund,
	Sweden}

\keywords{Dirichlet--Neumann waveform relaxation, error estimate, heterogeneous heat}

\subjclass[2020]{Primary
	65M55; 
}

\begin{document}
	
	\begin{abstract}
		We consider two coupled linear heat equations on different spatial domains that interact through a lower dimensional interface. This models conjugate heat transfer. The problem is solved using Dirichlet--Neumann waveform relaxation. This allows the subproblems to be solved using separate codes, a so called partitioned approach. Our overall goal is to develop more efficient partitioned methods, and to this end, we want reliable error estimates. 
		
		Here, we use an exponentially weighted Fourier technique to derive new error estimates in $l^2$ for finite time $T$ in the fully discrete setting. These describe both linear and superlinear behavior. We show that the fully discrete estimate is close to a previously obtained time discrete estimate and independent of $\Delta x_1$ and $\Delta x_2$ when the CFL number in the subsolvers is large. We also show that the convergence behaviour depends on the ratio $\Delta x_1/\Delta x_2$ when the CFL number is small.
		
		Our numerical experiments show that the fully discrete estimate is accurate across a wide range of grid sizes $\Delta x_1, \Delta x_2$, time step sizes $\Delta t$ and $T$.  
	\end{abstract}
	
	\maketitle


\input{introduction.tex}
\input{modelProblem.tex}
\input{fullyDiscreteEstimates.tex}	
\input{analysisOfFullyDiscEst.tex}

\input{experiments.tex}

\section{Conclusion}

We presented $l^2$ error estimates for the Dirichlet-Neumann waveform relaxation method applied to two coupled heat equations discretized with implicit Euler in time and finite differences in space. Our analysis is based on the exponentially weighted Fourier technique, introducing an auxiliary weight parameter $r$ to derive error estimates that depend on the time length $T$, the time step size $\Delta t$, and the grid sizes $\Delta x_1$ and $\Delta x_2$. 


 
From the error estimate, one can distinguish different behavior of the iteration if the CFL numbers in the subsolvers are small or large. For large CFL numbers, the behavior is the same as that of the time discrete counterpart and does not depend on $\Delta x_1$ and $\Delta x_2$. This situation has been analyzed in \cite{timeDiscArticle}. In contrast, for small CFL numbers, we expect linear convergence that depends on $\Delta x_1/\Delta x_2$, meaning that there is a dependence on the spatial mesh.
 
 
 
These theoretical findings are validated by numerical experiments, which demonstrate that the fully discrete estimates accurately capture the observed convergence behavior of the DNWR iteration, across a wide range of $T$, $\Delta t$, $\Delta x_1$ and $\Delta x_2$. 

\bibliographystyle{siam}
\bibliography{ourbibliography}
	
\end{document}

%% file: introduction.tex
\section{Introduction}

We consider two coupled linear heat equations on different spatial domains that interact through a lower dimensional interface. This setup models conjugate heat transfer, with applications ranging from the cooling of rocket motors \cite{kowollik2013} and turbine blades to gas quenching, an industrial heat-treatment process for metals \cite{vynnycky1998, monbir:18}. In addition, conjugate heat transfer also plays an important part in climate models, being, for example, the dominant mechanism in the interaction between ocean and atmosphere \cite{clement2022, DD29}. 

These types of problems are often solved using a partitioned approach, in which separate sub-solvers operate on each subdomain and interact only through boundary conditions at the interface. Depending on the choice of interface conditions, this leads to methods such as Dirichlet-Neumann or Robin-Robin coupling. A major advantage of the partitioned approach is that the discretization in each sub-solver can be chosen independently.

Independent time discretization of the sub-solvers can be achieved by so called Waveform Relaxation. This was originally introduced in \cite{lelarasmee1982} for the parallel simulation of large electronic circuits, modeled as large systems of ODEs. The basic idea is to solve coupled ODEs iteratively by alternately solving each subsystem while using data from the other subsystems. Waveform relaxation has since been combined with domain decomposition techniques to enable parallel solution of large PDE systems, see \cite[Chapter 3]{waveformBook} for a general introduction. Moreover, waveform relaxation methods are also used in multiphysics simulations. In the context of thermal FSI, they have been combined with time-adaptive sub-solvers in \cite{meimonbir:23}. Additionally, the coupling strategy employed in many earth system models in climate science can be interpreted as a Schwarz waveform relaxation that is terminated after the first iteration \cite{schuller2025}.

Achieving fast convergence in waveform relaxation methods often requires an appropriate choice of relaxation parameters. Suitable choices may lead to {\sl superlinear convergence} on finite time intervals. This property is particularly advantageous in long time simulations. In such simulations, the global time horizon is typically decomposed into smaller time intervals, and waveform relaxation iterations are performed within each time interval until a stopping criterion is satisfied before marching to the next time interval. Therefore, understanding how convergence depends on relaxation parameters, physical parameters, and the length of the time interval, is a central topic in the analysis of waveform relaxation methods.

In addition, the convergence behaviour is influenced by the choice of time and space discretizations. In both FSI and climate modeling, coarse discretizations that do not fully resolve the underlying physics are often employed. In FSI, this commonly occurs when modeling turbulence \cite{moin1998}, while for climate science it arises, for example, in Earth system models, where spatial meshes with horizontal resolutions of $50-100 km$ are in use. This raises the question of how coarse temporal and spatial resolution affect convergence behaviour. In addition, in both climate science and FSI, it is common to use different grid sizes in the different domains, raising the question how mismatches in spatial resolution affect the convergence behaviour. 


Several convergence analysis techniques for waveform relaxation methods have been developed~\cite{arnoult2023, cllebl:22, engstrom2024, gander2007, haynes2020, ledebl:13, monbir:18}. We list here some references that contain error estimates in the continuous, time discrete, space discrete and fully discrete setting. In the continuous setting, the authors in~\cite{ledebl:13} derive linear $L^2$ error estimates for WR applied to heterogeneous advection--diffusion equations on unbounded space-time domains, using Fourier analysis and the Plancherel theorem. In~\cite{gakwma:16}, the authors investigate DNWR and NNWR methods applied to homogeneous heat equations on finite time intervals. Based on the Laplace transform and kernel estimates, they derive superlinear $L^{\infty}$ error estimates using special relaxation parameters, $\theta=1/2$ for DNWR and $\theta=1/4$ for NNWR. In~\cite{DD28}, we extend this analysis of DNWR to heterogeneous heat equations, and also derive superlinear $L^{\infty}$ error estimates for a material-dependent relaxation parameter $\theta$. In~\cite{timeDiscArticle}, we use an exponentially weighted Fourier technique to derive error estimates in $L^2$ for finite $T$ and arbitrary relaxation parameters. We also show that the estimates yield superlinear convergence for a material-dependent relaxation parameter.

A time discrete analysis for heterogeneous advection-diffusion equations is presented in~\cite{clement2022}, where Clement et al. apply both implicit Euler and Runge-Kutta schemes, and derive linear $l^2$ error estimates using the $z$-transform on unbounded space-time domains. In~\cite{timeDiscArticle} we extend our continuous analysis for DNWR applied to heterogeneous heat equations to the time-discrete setting for implicit Euler. We use the exponentially weighted discrete time Fourier transform to obtain $l^2$ error estimates for finite $T$, allowing us to identify the time step sizes $\Delta t$ that recover the superlinear convergence behaviour of the continuous estimate.

The space discrete case has been analyzed for a reaction diffusion equation on unbounded spatial and temporal domains using finite differences in \cite{wu2014} and using both finite difference and finite volume discretizations in \cite{clement2022}. In both works, a Fourier transform in time is employed and the resulting difference equations are solved analytically, yielding linear error estimates. 

Lastly, the fully discrete case is analyzed in \cite{clement2022} for implicit Euler as well as Runge-Kutta methods in time, and finite difference or finite volume discretizations in space. In this analysis, Clement et. al. employ the $z$-transform in time and solve the resulting difference equations analytically, yielding linear error estimates. They also show that the discretization of the interface condition affects the convergence rate. However, they do not discuss how different spatial grid sizes in the domains affect convergence. 

The impact of different grid sizes is examined in \cite{monbir:18} for the special case in which both subsolvers do one implicit Euler time step. There, Monge and Birken analyze two coupled heteregeneous heat equations in one dimension, discretized using either a finite element or a finite volume method in space, yielding a convergence factor. They show that the CFL numbers of the two subsolvers, as well as the ratio of the grid sizes $\Delta x_1$ and $\Delta x_2$, have a significant impact on the convergence factor. In particular, they find that, if the two subsolver's CFL number is small, which is the case when the time step is small, then the convergence factor is proportional to the ratio $\Delta x_1 / \Delta x_2$. This result is counterintuitive, as it implies that refining the discretization does not yield a convergence rate that converges to that of the continuous iteration, which does not depend on the discretization. It is also not clear how the convergence factor for one time step from \cite{monbir:18} generalizes to multiple time steps.

Despite this body of work, to the best of our knowledge, there are currently only linear fully discrete error estimates. Furthermore, the effect of the spatial meshes is not fully understood. This is of particular interest in both climate science and thermal FSI, where large and different grid sizes are often employed in different domains. Finally, our understanding of the relation of the continuous, semidiscrete and fully discrete iterations is lacking. 

The present paper aims to answer these questions for two coupled linear heat equations with different material parameters discretized in time with implicit Euler and by finite differences in space. To analyze the convergence of DNWR applied to such problems on finite time intervals, we use the exponentially weighted Fourier transform proposed in~\cite{Kwok2025} and used in \cite{timeDiscArticle}. Using this framework, we derive $l^2$ estimates for finite time lengths $T$ in the fully discrete case. 

We analyze the convergence behaviour and show that the spatial discretization mainly affects large Fourier frequencies. In addition, we show that the ratio $\Delta x_1 / \Delta x_2$ is important when the CFL numbers of the subsolvers are small. We also show that the fully discrete estimate is close to the time discrete one, and thus independent of $\Delta x$, when the CFL numbers of the subsolvers are large. 

The remainder of the paper is organized as follows. In Section~\ref{sec:fullydisc}, we present a convergence analysis of DNWR applied to two coupled heat equations in the fully discrete setting. Section~\ref{sec:3} compares the fully discrete estimate with the continuous and time discrete counterparts from~\cite{timeDiscArticle}. Finally, in Section~\ref{sec:numExp}, we validate the theoretical results numerical experiments using realistic material parameters for air and steel, as well as for atmosphere and ocean. 

%% file: modelProblem.tex
\section{The Model problem}\label{sec:fullydisc}

We consider two coupled linear heat equations in one dimension as our model problem. Let $Q_1 = (-a,0)$ and $Q_2 = (0,b)$ with $a>0,b>0$ denote the two spatial domains, let $T>0$ be the time length and $u_j$ denote the temperature in each domain. The coupled problem is then given by \begin{equation}\label{eq:CoupledSystem}
	\begin{aligned}
		\alpha_1 \partial_t u_1 -\lambda_1 \partial_{xx} u_1 &= f_1& &\text{in } Q_1,&
		\alpha_2 \partial_t u_2 -\lambda_2 \partial_{xx} u_2 &= f_2&  &\text{in } Q_2,\\
		u_1(-a, t) &= g_l(t)& &\text{in } (0,T],&	
		u_1(b, t) &= g_r(t)&  &\text{in } (0,T],\\
		u_1(x, 0) &= u_{0, 1}(x)& &\text{in } (-a,0),& 
		u_2(x, 0) &= u_{0, 2}(x)& &\text{in } (0,b),
	\end{aligned}
\end{equation}
where $\alpha_i$ is the respective heat capacity, $\lambda_i$ the heat conductivity, $f_i$ the external force, $u_{0,i}$ the initial condition and $ g_i$ the initial condition. To close the system we also use the continuity of temperature and heat-flux on the interface as coupling conditions, yielding
\begin{equation*}
	u_1(0, t) = u_2(0, t) \ \text{ in } (0,T], \quad 
	-\lambda_1 \partial_x u_1(0, t) = -\lambda_2 \partial_x u_2(0, t) \ \text{ in } (0,T].
\end{equation*}
To solve the problem, we make use of DNWR. For a given initial guess $h^0(t)$ and the iteration index $k = 1, 2, \ldots$, the DNWR iteration is given by
\begin{equation}\label{eq:DNWR}
	\begin{aligned}
		\alpha_1 \partial_t u_1^{k+1} - \lambda_1 \partial_{xx} u_1^{k+1} &= f_1 \ \text{ in } Q_1,&
		\alpha_2 \partial_t u_2^{k+1} - \lambda_2 \partial_{xx} u_2^{k+1} &= f_2 \ \text{ in } Q_2, \\
		u_1^{k+1}(-a, t) &= g_l(t), \ &
		-\lambda_1 \partial_x u_1^{k+1}(0, t) &= -\lambda_2 \partial_x u_2^{k+1}(0, t), \\
		u_1^{k+1}(0, t) &= h^{k}(t), &
		u_1^k(b, t) &= g_r(t), \\
		u_1^{k+1}(x, 0) &= u_{0, 1}(x), &
		u_2^{k+1}(x, 0) &= u_{0, 2}(x),
	\end{aligned}
	\tag{DNWR}
\end{equation}
which is combined with the relaxation step: 
\begin{equation*}
	h^{k+1}(t) = \theta u_2^{k+1}(0,t) + (1-\theta) h^k(t),
\end{equation*}
where $\theta \in [0,1]$ denotes the relaxation parameter. 

Error estimates in $L^2$ are derived in \cite{timeDiscArticle} using an exponentially weighted Fourier transform.
Denoting the Fourier frequency by $\omega$ and the weight parameter by $r$, the continuous error estimate in
\cite{timeDiscArticle} is given by \begin{align} \label{eq:contEst}
	\|h^k\|_{L^2(0,T)} \leq \min_{r \geq 0} e^{rT} \max_{\omega \in \mathbb{R}} \left|\hat{\rho}(r + i \omega)\right|^{k} \norm{h^0}_{L^2(0,T)},
\end{align}  where $\hat{\rho}$ denotes the Fourier transformed convergence factor
\[\hat{\rho}(s) = 1-\theta -\theta \sqrt{\frac{\alpha_1\lambda_1}{\alpha_2\lambda_2}}\coth\left(\sqrt{\frac{\alpha_1 s}{\lambda_1}}a\right)\tanh\left(\sqrt{\frac{\alpha_2 s}{\lambda_2}}b\right), \quad s\in \mathbb{C}.\] 
In \cite{timeDiscArticle}, it is also shown that the continuous estimate \eqref{eq:contEst} converges superlinearly for finite $T$ and $\theta = \sqrt{\alpha_1\lambda_1}/(\sqrt{\alpha_1\lambda_1} + \sqrt{\alpha_2\lambda_2}) =: \theta^*$.

To obtain the {\it time discrete iteration}, we discretize the two coupled heat equations using $N$ implicit Euler steps of size $\Delta t$. Denote by $u^n$ the approximation of the solution $u$ at time $t_n$ and $\boldsymbol{u} = (u^1, \ldots, u^N)^\top$. For the initial guess ${\bf h}^0$, the time discrete iteration reads: For $n=0, \ldots, N-1$, let  \begin{equation}\label{eq:DNWRIE}
	\begin{aligned}
		\frac{\alpha_1}{\Delta t} (u_1^{n+1, k+1} - u_1^{n, k+1}) - \lambda_1 \partial_{xx} u_1^{n+1, k+1} &= f_1^{n+1} \ \text{ in } \ (-a,0) , \\
		u_1^{n+1, k+1}(-a) = g_l^{n+1}, \quad 
		u_1^{n+1, k+1}(0) &= h^{n+1, k},\\
		u_1^{0, k+1}(x) &= u_{0, 1}(x) \\
		\frac{\alpha_2}{\Delta t} (u_2^{n+1, k+1}-u_2^{n, k+1})-\lambda_2 \partial_{xx} u_2^{n+1, k+1} &= f_2^{n+1} \ \text{ in } \ (0,b),\\
		u_2^{n+1,k+1}(b) = g_r^{n+1}, \quad
		-\lambda_1 \partial_x u_1^{n+1, k+1}(0) &= -\lambda_2 \partial_x u_2^{n+1, k+1}(0), \\
		u_2^{0, k+1}(x) &= u_{0, 2}(x),\\
		h^{n+1, k+1} := (1-\theta) h^{n, k} + \theta &u^{n+1, k+1}_2(0),  \quad \theta\in(0, 1).
	\end{aligned}
	\tag{DNWR-IE}
\end{equation}

A corresponding time discrete error estimate to the continuous one is also derived in \cite{timeDiscArticle} using the exponentially weighted discrete time Fourier transform. Denoting the Fourier frequency by $\omega$ and the weight parameter by $r$, the time discrete error estimate in \cite{timeDiscArticle} is then given by \begin{align}\label{eq:timeDiscEst}
	\|{\bf h}^k\|_{l^2} \leq \min_{r \geq 0} e^{rT} \max_{\omega \in [-\pi,\pi]} \left|\hat{\rho}^{IE}(r \Delta t + i \omega)\right|^{k} \norm{\bf{h}^0}_{l^2},
\end{align} where \begin{align}\label{eq:timeDiscRho}
\hat{\rho}^{IE}(r \Delta t+i\omega) = \hat{\rho}\left(\frac{1-e^{-r \Delta t - i \omega}}{\Delta t} \right).
\end{align}

To obtain the {\it fully discrete iteration}, we discretize the time discrete iteration in space using a standard centered finite difference scheme on two different grids with $n_1+1$ and $n_2+1$, equidistant grid points, respectively. Let $i_1$ and $i_2$ denote the spatial indices in the two domains, and let the grid sizes be $\Delta x_1 = a/n_1$ and $\Delta x_2 = b/n_2$, respectively. Furthermore, denote the fully discrete solution in domain $m=1,2$ at index $i_m$ and time step $t^n$ by $u^{n}_{m, i_m}$ and let ${\bf u}_{m,i_m} = (u^1_{m,i_m}, ..., u_{m,i_m}^{N})^T$.

To discretize the Neumann condition, a central difference scheme with one ghost point $u^{n,k}_{m,n_m+1}$ is used. The value in the ghost point is given by
\begin{equation}
	u^{n+1,k}_{m,n_m+1} = \frac{\alpha_m \Delta x_m^2}{\lambda_m \Delta t} \left(u^{n+1,k}_{m,n_m}-u^{n,k}_{m,n_m} \right) + 2u_{m,n_m}^{n+1,k} - u_{m,n_m-1}^{n+1,k}.
\end{equation} 
The fully discrete DNWR then reads: For $k=1,.....$,$n = 0,...,N_1$, $i_1 = 0, ...., n_1$ and $i_2  =  0, ...., n_2$, let
\begin{equation}\label{eq:DNWRFullyDisc}
	\begin{aligned}	
		\alpha_1 \frac{u^{n+1,k+1}_{1,i_1}-u^{n, k+1}_{1,i_1}}{\Delta t} -\frac{\lambda_1}{\Delta x_1^2} 	(u_{1,i_1+1}^{n+1, k+1}- 2u_{1,i_1}^{n+1, k+1} + u_{1,i_1-1}^{n+1, k+1}) &= f_{i_1}^{n+1}, \\ 
		u_{1,i_1}^{0,k+1} = g_{1,i_1},  \quad u^{n, k+1}_{1,0} &= u_{1,0}^{n+1}, \\
		u^{n+1,k+1}_{1,n_1} &= h^{n+1,k}, \\
		\alpha_2 \frac{u^{n+1,k+1}_{2,i_2}-u^{n,k+1}_{2,i_2}}{\Delta t} -\frac{\lambda_2}{\Delta x_2^2} (u_{2,i_2+1}^{n+1,k+1} - 	2u_{2,i_2}^{n+1,k+1} + u_{2,i_2-1}^{n+1,k+1}) &= f_{i_2}^{n+1},\\ 
		u_{2,i,0}^k = g_{2,i_2}, \quad 	u^{n+1,k+1}_{2,0} &= u^{n+1}_{2,0}, \\
		-\lambda_2 \frac{u^{n+1,k+1}_{2,n_2+1} - u^{n+1,k+1}_{2,n_2-1}}{2 \Delta x_2}+ \lambda_1 \frac{u^{n+1,k+1}_{1,n_1+1} - 	u^{n+1,k+1}_{1,n_1-1,j}}{2 \Delta x_1} &= 0, \\
		h^{n+1, k+1}(t) := (1 - \theta) h^{n+1,k} + \theta &u_{2,n_2}^{n+1, k+1}.
	\end{aligned}
	\tag{DNWR-FD}
\end{equation}  

%% file: fullyDiscreteEstimates.tex
\section{Error estimates for fully discrete DNWR}

To analyze the convergence behaviour of the fully discrete iteration \eqref{eq:DNWRFullyDisc}, we focus on the associated error equations, which are obtained by subtracting the solution from the iterates. Note that this is equivalent to setting $f_{i,j}$ and $g_{i,j}$ to zero. 

To derive error estimates, we use the exponentially weighted Fourier transform technique, which was proposed in \cite{Kwok2025} and first used in a time discrete setting in \cite{timeDiscArticle}. Exactly as in \cite{timeDiscArticle}, we define the exponentially weighted time discrete Fourier transform as:
\begin{definition}\label{def:DiscFourier}
	Let $r>0$ and let $\omega \in [-\pi,\pi]$ denote the Fourier frequency, then the exponentially weighted discrete time Fourier transform of the sequence $\{\phi^n\}_{n\in\mathbb{Z}}$ is defined as
	\begin{equation}\label{eq:transform}
		\widehat{\phi}(r \Delta t + i \omega) := \sum_{n\in\mathbb{Z}} \phi^n e^{-n (r \Delta t+i\omega)}, \quad \omega \in [-\pi, \pi],
	\end{equation}
	whenever the sum converges.
\end{definition}
The exponentially weighted time discrete Fourier transform of a sequence $\{\phi^n\}_{n\in\mathbb{Z}}$ is the time discrete Fourier transform, applied to the sequence $\{e^{-r n \Delta t} \phi^n\}_{n\in\mathbb{Z}}$, with the weight parameter $r>0$. We use the hat notation to denote such a transform, and write the argument as $r \Delta t + i \omega$ to emphasize the dependence on both the weight parameter $r$ and the frequency $\omega$. From Definition~\ref{def:DiscFourier}, we can verify that the exponentially weighted time discrete Fourier inherits properties such as linearity, time shifting, differentiation and Parseval's theorem. 
We refer to the monograph~\cite[Chapter 2.9]{Oppenheim1999} for a detailed introduction. 

To show that the transform exists for a class of functions which is relevant to our study, we use \cite[Lemma 3]{timeDiscArticle}, restated as Lemma \ref{lem:prepDisc2norm}.
\begin{lemma}\label{lem:prepDisc2norm}
	Let $r > 0$ and $\boldsymbol{\phi} = \{\phi^n\}_{n\in\mathbb{Z}}$ be a sequence such that $ \sum_{n\in\mathbb{N}} (\phi^n)^2 < \infty$, and $\phi^n=0$ for $n<0$. Then the exponentially weighted discrete time Fourier exists, and we have
	\[\frac{1}{2 \pi} \int_{-\pi}^{\pi} |\widehat{\phi}(r\Delta t+i\omega)|^2 d\omega \leq \norm{\boldsymbol{\phi}}_{l^2}^2.\]
\end{lemma}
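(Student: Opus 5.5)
The plan is to reduce the statement to Parseval's identity for the ordinary discrete time Fourier transform, applied to the weighted sequence $\psi^n := e^{-rn\Delta t}\phi^n$. By Definition~\ref{def:DiscFourier}, $\widehat{\phi}(r\Delta t+i\omega) = \sum_{n\in\mathbb{Z}} \psi^n e^{-in\omega}$. The argument has three steps: show $\boldsymbol{\psi}\in l^2$, use this to make sense of the transform, and compare $\norm{\boldsymbol{\psi}}_{l^2}$ with $\norm{\boldsymbol{\phi}}_{l^2}$.

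\emph{Existence.} Since $\phi^n=0$ for $n<0$ and $r>0$, we have $|\psi^n| \le |\phi^n|$ for every $n$. Hence $\boldsymbol{\psi}\in l^2$ with $\norm{\boldsymbol{\psi}}_{l^2}\le\norm{\boldsymbol{\phi}}_{l^2}$. The series in fact converges absolutely for every $\omega$, by Cauchy--Schwarz:
\[
\sum_{n\ge 0}|\phi^n| e^{-rn\Delta t} \le \norm{\boldsymbol{\phi}}_{l^2}\Bigl(\sum_{n\ge0} e^{-2rn\Delta t}\Bigr)^{1/2} = \norm{\boldsymbol{\phi}}_{l^2}\,(1-e^{-2r\Delta t})^{-1/2} < \infty .
\]
This uses $r\Delta t>0$, which is where the strict positivity of $r$ enters. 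So $\widehat{\phi}(r\Delta t+i\omega)$ is a well-defined, continuous, $2\pi$-periodic function of $\omega$, and the transform exists pointwise rather than only in an $L^2$ sense.

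\emph{Parseval and the bound.} Absolute convergence of the double series lets us expand $|\widehat\phi|^2 = \sum_{n,m}\psi^n\overline{\psi^m}e^{-i(n-m)\omega}$ and integrate term by term over $[-\pi,\pi]$. The orthogonality relation $\frac{1}{2\pi}\int_{-\pi}^{\pi} e^{-i(n-m)\omega}\,d\omega=\delta_{nm}$ then gives
\[
\frac{1}{2\pi}\int_{-\pi}^{\pi}|\widehat{\phi}(r\Delta t+i\omega)|^2\,d\omega = \sum_{n\ge0} e^{-2rn\Delta t}|\phi^n|^2 \le \norm{\boldsymbol{\phi}}_{l^2}^2 ,
\]
and this is the claim. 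Equivalently, one can simply cite Parseval's theorem for $l^2$ sequences from \cite[Chapter 2.9]{Oppenheim1999}.

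The only point needing care is justifying the interchange of sum and integral, and the absolute summability shown above settles it. Everything else is direct.
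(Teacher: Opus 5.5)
Your proof is correct and matches the paper's approach. The paper gives no proof here; it restates \cite[Lemma 3]{timeDiscArticle} and relies on viewing the weighted transform as the ordinary discrete time Fourier transform of $\{e^{-rn\Delta t}\phi^n\}$, to which Parseval applies, with the domination $|e^{-rn\Delta t}\phi^n|\le|\phi^n|$ following from $\phi^n=0$ for $n<0$. Your extra Cauchy--Schwarz step, which gives absolute convergence for $r\Delta t>0$, is a useful addition: it makes $\widehat{\phi}(r\Delta t+i\omega)$ a genuine continuous function of $\omega$ rather than an $L^2$ limit, which suits the pointwise-in-$\omega$ treatment of the transformed difference equations before Theorem~\ref{thm:superLinearEstFD}.
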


To apply the transform \eqref{eq:transform} to ${\bf u}_{1,i_1}^{k},{\bf u}_{2,i_2}^{k}, {\bf h}^{k}$ in time, we need to extend them to $n \in (-\infty,\infty)$. With slight abuse of notation, we do this by setting \begin{align*}
	h^{n,k} = 0 \text{ for } n < 0 \text{ and } n>N \\
	u_{1,i}^{n,0} = 0 \text{ for } n < 0 \text{ and } n>N \\
	u_{1,i}^{n,0} = 0 \text{ for } n < 0 \text{ and } n>N.
\end{align*} The extension for $k>0$ is then defined through the iteration. 

Taking the transform \eqref{eq:transform} of the fully discrete iteration \eqref{eq:DNWRFullyDisc} yields 
\begin{equation}\label{eq:FourierFullyDisc}
	\begin{aligned}
		\frac{\alpha_1 \left(1 - e^{-r \Delta t - i \omega}\right) }{\Delta t} \hat{u}_{1,i_1}^{k+1} -\frac{\lambda_1}{\Delta x_1^2} (\hat{u}_{1,i_1+1}^{k+1}- 2 \hat{u}_{1,i_1}^{k+1} + \hat{u}_{1,i_1-1}^{k+1}) &= 0, \\ 
		\hat{u}^{k+1}_{1,0} &= 0,\\
		\hat{u}^{k+1}_{1,n_1} &= \hat{h}^{k}, \\
		\frac{\alpha_2 \left(1 - e^{-r \Delta t - i \omega}\right) }{\Delta t} \hat{u}_{2,i_2}^{k+1} -\frac{\lambda_2}{\Delta x_2^2} (\hat{u}_{2,i_2+1}^{k+1}- 2\hat{u}_{2,i_2}^{k+1} + \hat{u}_{2,i_2-1}^{k+1}) &= 0,\\ 
		\bar{u}^{k+1}_{2,0} &= 0,\\
		-\lambda_2 \frac{\hat{u}^{k+1}_{2,n_2+1} - \hat{u}^{k+1}_{2,n_2-1}}{2 \Delta x_2} + \lambda_1 \frac{\hat{u}^{k+1}_{1,n_1+1} - \hat{u}^{k+1}_{1,n_1-1}}{2 \Delta x_1} &=0,\\
		\hat{h}^{k} = (1-\theta) \hat{h}^{k-1} -  \theta &\hat{u}^k_{2,n_2}.
	\end{aligned}
\end{equation} 

Similar to \cite{clement2022} and \cite{wu2014}, we solve the difference equations exactly. The characteristic equation for both subproblems is given by \begin{align*}
	\frac{\alpha_m \left(1 - e^{-r \Delta t - i \omega}\right) }{\Delta t} \hat{u}_{m,i_m}^k(r \Delta t + i \omega) \\ -\frac{\lambda_m}{\Delta x_m^2} (\hat{u}_{m,i_m+1}^k(r \Delta t + i \omega)- 2\hat{u}_{m,i_m}^k(r \Delta t + i \omega) + \hat{u}_{m,i_m-1}^k(r \Delta t + i \omega)) = 0
\end{align*}  for $m= 1,2$, with solution 
\begin{align*}
	\hat{u}_{m,i_m}(r \Delta t + i \omega) = A{r_{m,+}(r \Delta t + i \omega)}^{i_m} + B{r_{m,-}(r \Delta t + i \omega)}^{i_m},
\end{align*}
where $A,B$ are constants and the function $r_{m, \pm}$ is given by
\begin{equation}\label{eq:rm}
	\begin{aligned}
		r_{m,\pm}(r \Delta t + i \omega) =& 1 + \frac{\alpha_m \left(1 - e^{-r \Delta t - i \omega} \right)}{2 \lambda_m} \frac{\Delta x_m^2}{\Delta t} 
		\\ &\pm \sqrt{\frac{ \alpha_m \left(1 - e^{-r \Delta t -i \omega} \right) }{ \lambda_m}\frac{\Delta x_m^2}{\Delta t}} \sqrt{1 + \frac{\alpha_m \left(1 - e^{-r \Delta t - i \omega} \right)}{4 \lambda_m}\frac{\Delta x_m^2}{\Delta t}}.
	\end{aligned}
\end{equation}

Imposing the boundary conditions yields the solution for the first subproblem as 
\begin{align*}
	\hat{u}^k_{1,i_1} = \frac{(r_{1,+})^{i_1}-(r_{1,-})^{i_1}}{(r_{1,+})^{n_1}-(r_{1,-})^{n_1}} \hat{h}^{k-1},
\end{align*} 
and for the second subproblem as
\begin{multline*}
	\hat{u}^k_{2,i_2} = \frac{- \lambda_1 \Delta x_2}{\lambda_2 \Delta x_1} \frac{(r_{2,+})^{i_2}-(r_{2,-})^{i_2}}{(r_{2,+})^{n_2+1}-
		(r_{2,-})^{n_2+1}-(r_{2,+})^{n_2-1}+(r_{2,-})^{n_2-1}}\\
	\frac{(r_{1,+})^{n_1+1}-(r_{1,-})^{n_1+1}-(r_{1,+})^{n_1-1}+(r_{1,-})^{n_1-1}}{(r_{1,+})^{n_1}-(r_{1,-})^{n_1}} \hat{h}^{k-1}.
\end{multline*}
Using the last equation in \eqref{eq:FourierFullyDisc}, we obtain by induction that
\begin{align*}
	\hat{h}^{k+1} = \left( (1 -\theta)- \theta \frac{\lambda_1 \Delta x_2}{\lambda_2 \Delta x_1} \frac{f_2}{f_1} \right)^{k} \hat{h}^0,
\end{align*} 
where the functions $f_m$, $m=1,2$, depend on $r\Delta t+i\omega$ and are defined as 
\begin{equation}\label{eq:fm}
	\begin{aligned}
		f_m = \frac{ (r_{m,+})^{n_m}-(r_{m,-})^{n_m}}{(r_{m,+})^{n_m+1}-(r_{m,-})^{n_m+1}-(r_{m,+})^{n_m-1}+(r_{m,-})^{n_m-1}}.
	\end{aligned}
\end{equation} 
The convergence factor $\hat{\rho}_{FD}^{IE}$ is then given by 
\begin{equation}\label{eq:fullyDiscConvfac}
	\hat{\rho}_{FD}^{IE}\left(r \Delta t + i \omega \right) = (1 -\theta)- \theta \frac{\lambda_1 \Delta x_2}{\lambda_2 \Delta x_1} \frac{f_2\left(r \Delta t + i \omega \right)}{f_1\left(r \Delta t + i \omega \right)}.
\end{equation} 
The error estimate for finite $T$ is then given by Theorem \ref{thm:superLinearEstFD}.
\begin{theorem}\label{thm:superLinearEstFD}
	The error of the fully discrete iteration \eqref{eq:DNWRFullyDisc} on the time interval $[0, T]$, satisfies \begin{align}\label{eq:fullyDiscEst}
		\|\boldsymbol{h}^k\|_{l^2} \leq \inf_{r \in \mathbb{R}^+} e^{rT}\max_{\omega \in [-\pi,\pi]} |\hat{\rho}^{IE}_{FD}(r \Delta t + i \omega)^k| \norm{\boldsymbol{h}^0}_{l^2}.
	\end{align}  
\end{theorem}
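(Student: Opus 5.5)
Fix $r>0$ and work with the exponentially weighted transform of Definition~\ref{def:DiscFourier}. The argument follows the time discrete one in \cite{timeDiscArticle}: bound the finite-time $l^2$ norm of ${\bf h}^k$ by the weighted norm of its causal extension, pass to the frequency domain, and use the closed-form multiplier $\hat{\rho}^{IE}_{FD}$ derived above.

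\textbf{Step 1: causality.} First I would show that the extension of ${\bf h}^k$ defined through the iteration vanishes for $n<0$. The subproblems in \eqref{eq:DNWRFullyDisc} are implicit Euler recursions started from zero data. By induction on $k$, and within each $k$ on $n$, the values $u^{n,k}_{m,i_m}$ and $h^{n,k}$ depend only on $h^{n',k-1}$ with $n'\le n$. Hence zero data for $n<0$ propagates.

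Since $r>0$, the weighted sequence $e^{-rn\Delta t}h^{n,k}$ is square summable. Its square summability follows from Lemma~\ref{lem:prepDisc2norm} applied to the (possibly infinite) sequence: the iteration is a stable recursion, and the factor $e^{-rn\Delta t}$ dominates any growth. Thus every transform appearing in \eqref{eq:FourierFullyDisc} exists. This justifies the formula
\[
\hat h^k(r\Delta t+i\omega)=\hat\rho^{IE}_{FD}(r\Delta t+i\omega)^k\,\hat h^0(r\Delta t+i\omega).
\]

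\textbf{Step 2: lower bound on $[0,T]$.} For $0\le n\le N$ we have $e^{-rn\Delta t}\ge e^{-rT}$. Restricting the Parseval sum to these indices gives
\[
e^{-2rT}\sum_{n=0}^{N}|h^{n,k}|^2\le \sum_{n\ge0}e^{-2rn\Delta t}|h^{n,k}|^2=\frac1{2\pi}\int_{-\pi}^{\pi}|\hat h^k(r\Delta t+i\omega)|^2\,d\omega .
\]
This uses Parseval for the weighted sequence, which is where Lemma~\ref{lem:prepDisc2norm} enters. Note that $\|{\bf h}^k\|_{l^2}$ means the norm on the time window, and the iterate for $n>N$ does not affect values up to $N$ by causality.

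\textbf{Step 3: upper bound.} Insert the multiplier relation and pull out $\max_\omega|\hat\rho^{IE}_{FD}|^{2k}$. The initial guess ${\bf h}^0$ is supported on $0\le n\le N$ with weights $\le1$, so Lemma~\ref{lem:prepDisc2norm} bounds $\frac1{2\pi}\int|\hat h^0|^2\,d\omega\le\|{\bf h}^0\|_{l^2}^2$. Combining with Step 2 and taking square roots gives the estimate for each $r>0$. Taking the infimum over $r$ yields \eqref{eq:fullyDiscEst}.

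\textbf{Main obstacle.} The hard part is rigor in Steps 1–3 around the multiplier. The derivation of $\hat\rho^{IE}_{FD}$ requires the denominators $(r_{m,+})^{n_m}-(r_{m,-})^{n_m}$, and the one defining $f_2$, to be nonzero for $\Re z=r\Delta t>0$. If they are not, the spatial boundary value problems in \eqref{eq:FourierFullyDisc} may fail to be uniquely solvable.

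I would attack this first through the spatial operator rather than the roots. The quantity $\alpha_m(1-e^{-z})/\Delta t$ has positive real part whenever $r>0$, since $|e^{-z}|<1$. The discrete Dirichlet/Neumann Laplacians are symmetric negative definite. So $\alpha_m(1-e^{-z})/\Delta t$ minus the discrete Laplacian is invertible, and the transformed subproblems have unique solutions. The closed forms then coincide with those solutions, which justifies $f_1\neq0$ and the finiteness of $\hat\rho^{IE}_{FD}$ on $[-\pi,\pi]$. Continuity in $\omega$ then makes the maximum attained.
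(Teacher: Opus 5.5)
Your proposal is correct and follows the paper's proof essentially step for step. Both bound the window norm by $e^{rT}$ times the weighted norm of the causal extension, apply Parseval, factor out $\max_\omega|\hat\rho^{IE}_{FD}|^{2k}$, and use Lemma~\ref{lem:prepDisc2norm} for $\hat{h}^0$; the points you add on causality and on unique solvability of the transformed subproblems are left implicit in the paper.

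Two small wording fixes. First, Lemma~\ref{lem:prepDisc2norm} assumes square summability rather than proving it, so it is your stability argument that supplies it. Second, the ghost-point Neumann matrix for subproblem 2 is not symmetric, only diagonally similar to a symmetric one; it still has real negative eigenvalues, so your invertibility conclusion stands.
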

\begin{proof}
	As $r>0$, we have
	\begin{align*}
		\|\boldsymbol{h}^k\|_{l^2}^2 \leq
		e^{2 rT} \sum_{n=0}^{N} e^{-2 r \Delta t n} \left(h^{n, k}\right)^2 \leq e^{2 rT} \sum_{n=-\infty}^{\infty} \left( e^{- r \Delta t n}h^{n, k}\right)^2,
	\end{align*} where in the last inequality we use the extension of the sequence $\boldsymbol{h}^k$. By Lemma \ref{lem:prepDisc2norm} the exponentially weighted discrete time Fourier transform of $\boldsymbol{h}^k$ exists. Parseval's theorem then yields
	\begin{align*}
		e^{2rT} \sum_{n=-\infty}^{\infty} \left( e^{- r \Delta t n}h^{n, k}\right)^2
		&\leq e^{2rT} \frac{1}{2 \pi} \int_{-\pi}^{\pi} \left| \hat{h}^{k}(r \Delta t + i \omega) \right|^2  \\
		&\leq \frac{e^{2rT}}{2 \pi} \max_{\omega \in [-\pi,\pi]} \left|\hat{\rho}^{IE}_{FD}(r \Delta t + i\omega)\right|^{2k}\int_{-\pi}^{\pi} \left| \hat{h}^{0}(r \Delta t + i \omega) \right|^2  \\
		&\leq e^{2rT} \max_{\omega \in [-\pi,\pi]} \left|\hat{\rho}^{IE}_{FD}(r \Delta t + i\omega)\right|^{2k} \norm{\bf{h}^0}_{l^2}^2,
	\end{align*}
	where we use Lemma \ref{lem:prepDisc2norm} in the last inequality.  Lastly, taking the square root on both sides and the infimum over all $r$ yields \eqref{eq:fullyDiscConvfac}.
\end{proof}

Note that we obtain linear error estimates, corresponding to those in \cite{cllebl:22}, by setting $r=0$, resulting in: 
\begin{corollary}\label{cor:independentTEst}
	The error of the fully discrete iteration \eqref{eq:DNWRFullyDisc}, satisfies  \begin{align}\label{eq:independentTEst}
		\|\boldsymbol{h}^k\|_{l^2} \leq \max_{\omega \in [-\pi,\pi]} |\hat{\rho}^{IE}_{FD}(i \omega)|^k \norm{\boldsymbol{h}^0}_{l^2}.
	\end{align}  
\end{corollary}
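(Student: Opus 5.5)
The plan is to obtain Corollary~\ref{cor:independentTEst} from the argument behind Theorem~\ref{thm:superLinearEstFD}, run at weight $r=0$. The estimate~\eqref{eq:fullyDiscEst} is only stated for $r\in\mathbb{R}^+$, so I would not simply substitute $r=0$. Instead I would redo the argument with $r=0$ and observe that every step survives, and in fact simplifies. The extended sequences $\boldsymbol{h}^k$ are supported on finitely many indices $0\le n\le N$. Hence the unweighted discrete time Fourier transform $\hat h^k(i\omega)=\sum_n h^{n,k}e^{-in\omega}$ is a trigonometric polynomial, and it exists for every $\omega\in[-\pi,\pi]$. Lemma~\ref{lem:prepDisc2norm} is then not even needed for existence, and Parseval holds with equality:
\begin{equation*}
\norm{\boldsymbol{h}^k}_{l^2}^2=\frac{1}{2\pi}\int_{-\pi}^{\pi}\left|\hat h^k(i\omega)\right|^2\,d\omega .
\end{equation*}

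Second, I would justify the transformed recursion~\eqref{eq:FourierFullyDisc} at $s=i\omega$, namely $\hat h^{k}(i\omega)=\hat\rho^{IE}_{FD}(i\omega)^k\,\hat h^0(i\omega)$. This reuses the exact solution of the difference equations via $r_{m,\pm}$ and $f_m$, now evaluated at $r\Delta t=0$. The quantity to check is $z=\alpha_m(1-e^{-i\omega})\Delta x_m^2/(\lambda_m\Delta t)$. For $\omega\neq0$, $z$ lies in the open right half plane. Then $r_{m,+}\neq r_{m,-}$, and the denominators of $f_1$ and of $\hat u_{1,i_1}$ are nonzero, since these are the discrete Dirichlet and Neumann problems with a resolvent parameter away from the nonpositive real spectrum. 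At the single point $\omega=0$ we have $z=0$ and $r_{m,\pm}=1$. Here the formulas must be read as limits: the solutions are linear in $i_m$, and $f_m$ has a finite limit. This single point has measure zero, so it does not affect the $L^2$ integral. It is also harmless for the maximum, because $\hat\rho^{IE}_{FD}(i\omega)$ extends continuously to $\omega=0$.

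With these two facts the chain of inequalities becomes
\begin{equation*}
\norm{\boldsymbol{h}^k}_{l^2}^2=\frac{1}{2\pi}\int_{-\pi}^{\pi}\left|\hat\rho^{IE}_{FD}(i\omega)\right|^{2k}\left|\hat h^0(i\omega)\right|^2 d\omega\le \max_{\omega\in[-\pi,\pi]}\left|\hat\rho^{IE}_{FD}(i\omega)\right|^{2k}\norm{\boldsymbol{h}^0}_{l^2}^2 .
\end{equation*}
The first equality needs one remark: because of truncation, the transformed iterates may in fact be nonzero for $n>N$. I would handle this exactly as in the proof of Theorem~\ref{thm:superLinearEstFD}, by bounding the $l^2$ norm over $0\le n\le N$ by the full-line sum. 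This turns the equality into $\le$, and taking square roots gives~\eqref{eq:independentTEst}.

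I expect the main obstacle to be the $\omega=0$ degeneracy together with the square-integrability of the extended iterates. Without the weight $e^{-rn\Delta t}$, I must make sure that the extension of $\boldsymbol{h}^k$ defined through the iteration stays in $l^2(\mathbb{Z})$. I would argue this inductively: if $|\hat\rho^{IE}_{FD}(i\omega)|$ is bounded on $[-\pi,\pi]$, then $\hat h^k(i\omega)$ is in $L^2$, and by Parseval the sequence is in $l^2$. Boundedness of $\hat\rho^{IE}_{FD}$ on the compact interval follows from continuity once the limit at $\omega=0$ is verified. If instead this limiting argument is awkward, the fallback is to let $r\to0^+$ in~\eqref{eq:fullyDiscEst}. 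Then $e^{rT}\to1$, and continuity of $\hat\rho^{IE}_{FD}$ in $r\Delta t$, uniform in $\omega$ on compacts, gives the same bound.
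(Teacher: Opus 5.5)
Your proposal is correct and is essentially the paper's argument: the paper obtains Corollary~\ref{cor:independentTEst} simply by setting $r=0$ in Theorem~\ref{thm:superLinearEstFD}. Your fallback is the rigorous form of that step: let $r\to0^+$ in \eqref{eq:fullyDiscEst}, using $e^{rT}\to1$ and joint continuity of $\hat{\rho}^{IE}_{FD}$ up to the removable point $r\Delta t+i\omega=0$, where $f_m\to n_m/2$.

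On your direct route, two points need fixing. Only $\boldsymbol{h}^0$ is finitely supported; for $k\ge1$ the extended iterates are nonzero for $n>N$. Also, boundedness of $\hat{\rho}^{IE}_{FD}$ on the unit circle alone does not put the causal iterates in $l^2$, so your inductive argument is circular there. What does work is that $\hat{\rho}^{IE}_{FD}$ is rational in $z=e^{r\Delta t+i\omega}$, with poles only where $1-z^{-1}$ is negative real, i.e.\ at $z\in(0,1)$. This is the stability of the implicit Euler subsolvers.
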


\begin{remark}\label{rem:Tintuition}
	To give some intuition for the effect of $T$ on the estimate \eqref{eq:fullyDiscEst}, note that if $T$ is large, then $exp(rT)$ dominates in the fully discrete estimate \eqref{eq:fullyDiscEst}, and the optimal $r^*$ is close to zero, yielding linear convergence. However, if $T$ is small, then $exp(rT) \approx 1$, and the optimal $r^*$ is close to the $r$ minimizing \begin{equation}\label{def:supWfunc}
		F^{IE}_{FD}(r) = \max_{\omega \in [-\pi,\pi]}\left| \hat{\rho}^{IE}_{FD}\left(\frac{1-e^{-r \Delta t - i \omega}}{\Delta t}\right)\right|.
	\end{equation}
\end{remark}

%% file: analysisOfFullyDiscEst.tex
\section{Analysis of the fully discrete convergence estimate}\label{sec:3}

We now investigate the behaviour of the fully discrete estimate and discuss how it is qualitatively affected by the discretization, material parameters, domain sizes and time window size $T$. We begin by examining how the discretization, material parameters, and domain sizes influence $\hat{\rho}_{FD}^{IE}$ in \eqref{eq:fullyDiscConvfac}. 

Note that $\hat{\rho}_{FD}^{IE}$  depends linearly on the ratios $\lambda_1/\lambda_2$, $\Delta x_1/\Delta x_2$ and $\theta$. However, $\hat{\rho}_{FD}^{IE}$ also depends on the functions $f_1$ and $ f_2$, which in turn depend nonlinearly on the number of gridpoints $n_1$ and $n_2$, as well as on
 \begin{equation}\label{def:sm}
 s_m = \frac{\alpha_m \left(1 - e^{-r \Delta t - i \omega} \right)}{2 \lambda_m} \frac{\Delta x_m^2}{\Delta t}, \quad \text{ for } m=1,2.
 \end{equation} 
 By approximating $\hat{\rho}_{FD}^{IE}$ in the regimes $|s_m| \ll 1$ and $|s_m| \gg 1$, we obtain Theorem \ref{thm:assymptoticsRhoFD}.

\begin{theorem}\label{thm:assymptoticsRhoFD}
	If $|s_1|\gg 1$ and $|s_2| \gg 1 $, then \[\hat{\rho}_{IE}^{FD}(r \Delta t + i \omega) \approx 1 - \theta - \theta \frac{\alpha_1 \Delta x_1}{\alpha_2 \Delta x_2}.\]
	If $|s_1|\ll 1$ and $|s_2| \ll 1 $, then \begin{align*}
		\hat{\rho}_{FD}^{IE}(r \Delta t + i \omega) \approx \hat{\rho}^{IE}(r\Delta t+i\omega) = \\
		1 - \theta - \theta\sqrt{\frac{\alpha_1 \lambda_1}{\alpha_2 \lambda_2}} \coth\left(\sqrt{\frac{\alpha_1 (1-e^{- r \Delta t-i\omega})}{\lambda_1 \Delta t}}a\right)\tanh\left(\sqrt{\frac{\alpha_2 (1-e^{- r \Delta t-i\omega })}{\lambda_2 \Delta t}}b\right),
	\end{align*}
	where $\hat{\rho}^{IE}$ is given by \eqref{eq:timeDiscRho}.
\end{theorem}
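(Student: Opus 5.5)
The plan is to first rewrite $f_m$ in closed hyperbolic form, and then expand it in the two regimes. The key observation is that the roots in \eqref{eq:rm} satisfy $r_{m,+}r_{m,-}=1$ and $r_{m,+}+r_{m,-}=2(1+s_m)$, with $s_m$ as in \eqref{def:sm}. Hence we may write $r_{m,\pm}=e^{\pm\mu_m}$, where $\cosh\mu_m = 1+s_m$ and the branch is chosen with $\operatorname{Re}\mu_m\ge 0$. This is possible because $\operatorname{Re}(1-e^{-r\Delta t-i\omega})\ge 0$ for $r\ge 0$, so $\operatorname{Re} s_m\ge 0$. Substituting into \eqref{eq:fm} and using $\sinh((n+1)\mu)-\sinh((n-1)\mu)=2\cosh(n\mu)\sinh\mu$ gives
\begin{equation*}
f_m=\frac{\tanh(n_m\mu_m)}{2\sinh\mu_m}.
\end{equation*}
Therefore
\begin{equation*}
\hat{\rho}_{FD}^{IE}=1-\theta-\theta\,\frac{\lambda_1\Delta x_2}{\lambda_2\Delta x_1}\,\frac{\sinh\mu_1\,\tanh(n_2\mu_2)}{\sinh\mu_2\,\tanh(n_1\mu_1)}.
\end{equation*}
This exact formula is the starting point for both approximations.

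\emph{Regime $|s_m|\gg1$.} Here $e^{\mu_m}\approx 2(1+s_m)$, so $|e^{\mu_m}|$ is large and $\operatorname{Re}\mu_m$ is large. It follows that $\tanh(n_m\mu_m)\approx 1$ (since $n_m\ge1$) and $\sinh\mu_m\approx\cosh\mu_m\approx s_m$. The ratio $\sinh\mu_1/\sinh\mu_2$ then becomes
\begin{equation*}
\frac{s_1}{s_2}=\frac{\alpha_1\lambda_2\Delta x_1^2}{\alpha_2\lambda_1\Delta x_2^2},
\end{equation*}
because the common factor $(1-e^{-r\Delta t-i\omega})/\Delta t$ cancels. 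Multiplying by $\lambda_1\Delta x_2/(\lambda_2\Delta x_1)$ gives exactly $\alpha_1\Delta x_1/(\alpha_2\Delta x_2)$, which is the first claim.

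\emph{Regime $|s_m|\ll1$.} From $\cosh\mu_m=1+s_m$ we get $\mu_m=\sqrt{2s_m}\,(1+O(s_m))$, and hence $\sinh\mu_m\approx\sqrt{2s_m}$. The ratio $\sqrt{s_1/s_2}$ combines with $\lambda_1\Delta x_2/(\lambda_2\Delta x_1)$ to give $\sqrt{\alpha_1\lambda_1/(\alpha_2\lambda_2)}$. For the $\tanh$ factors, I would write
\begin{equation*}
n_m\sqrt{2s_m}=n_m\Delta x_m\sqrt{\frac{\alpha_m(1-e^{-r\Delta t-i\omega})}{\lambda_m\Delta t}}
\end{equation*}
and use $n_1\Delta x_1=a$ and $n_2\Delta x_2=b$. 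This turns $\tanh(n_2\mu_2)/\tanh(n_1\mu_1)$ into the factor $\coth(\cdot\,a)\tanh(\cdot\,b)$ of \eqref{eq:timeDiscRho}. That recovers $\hat\rho^{IE}$, i.e.\ $\hat\rho$ evaluated at $(1-e^{-r\Delta t-i\omega})/\Delta t$.

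The main obstacle is the small-$s_m$ step. Although $s_m$ is small, $n_m\mu_m$ need not be small, so it is not enough to expand only $\sinh\mu_m$. I would control the argument of $\tanh$ through its relative error: $n_m\mu_m - n_m\sqrt{2s_m}=n_m\sqrt{2s_m}\cdot O(s_m)$. This is a small relative perturbation of the continuous argument $a\kappa_1$ (respectively $b\kappa_2$).

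Next I would invoke boundedness of the logarithmic derivative of $\tanh$ and $\coth$ on the sector $\operatorname{Re} z\ge 0$, away from the poles of $\coth$ near the imaginary axis. With this, the relative error transfers to $\tanh(n\mu)$ with only a modest amplification. Since the statement is a heuristic ``$\approx$'', I would present this as a leading-order asymptotic argument rather than give uniform constants.
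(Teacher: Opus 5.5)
Your proposal is correct and follows essentially the paper's route: approximate $f_m$ in each regime, write the powers of $r_{m,\pm}$ exponentially to reach a $\tanh(n_m\,\cdot)/(2\,\cdot)$ form when $|s_m|\ll 1$, and use $n_1\Delta x_1=a$, $n_2\Delta x_2=b$ together with the cancellation of $(1-e^{-r\Delta t-i\omega})/\Delta t$ in the ratios. The only difference is that you make the hyperbolic form exact first via $r_{m,+}r_{m,-}=1$, i.e.\ $f_m=\tanh(n_m\mu_m)/(2\sinh\mu_m)$, and only then expand $\mu_m$; this is cleaner than the paper's order of approximations, whose intermediate claim $r_{m,-}\approx 1$ for $|s_m|\gg1$ is actually off (since $r_{m,-}=1/r_{m,+}\approx 0$), though harmless. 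Your worry about poles is also moot, because $\sqrt{1-e^{-r\Delta t-i\omega}}$ lies in the sector $|\arg z|\le\pi/4$, where $z/(\sinh z\cosh z)$ is bounded.
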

\begin{proof}
	If $s_m \gg 1$ for $m=1,2$ then $r_{m,\pm}$ given in \eqref{eq:rm} can be approximated by \begin{align*}
		r_{m, +} &= 1 + s_m/2 + \sqrt{s_m} \sqrt{1 + s_m/4}  \\
		&\approx 1 + s_m/2 + \sqrt{s_m} \sqrt{s_m/4 } 
		\approx s_m, 
	\end{align*}
	and
	\begin{align*}
		r_{m, -} &= 1 + s_m/2 - \sqrt{s_m} \sqrt{1 + s_m/4}  \\ &\approx 1 + s_m/2 - \sqrt{s_m} \sqrt{s_m/4 } 
		\approx 1.
	\end{align*}
	Inserting the approximation into $f_m$, defined in \eqref{eq:fm}, yields 
	\[f_m = \frac{(r_{m,+})^{n_m}-(r_{m,-})^{n_m}}{(r_{m,+})^{n_m+1}-(r_{m,-})^{n_m+1}-(r_{m,+})^{n_m-1}+(r_{m,-})^{n_m-1}} \approx \frac{1}{s_m}, \]
	resulting in \[
	\hat{\rho}_{FD}^{IE} =  (1 -\theta)- \theta \frac{\lambda_1 \Delta x_2}{\lambda_2 \Delta x_1} \frac{f_2}{f_1} \approx 1 - \theta - \theta \frac{\alpha_1 \Delta x_1}{\alpha_2 \Delta x_2}.
	\] 
	
	For the second case where $s_1 \ll 1$ and $s_2 \ll 1$, $r_{m,\pm}$ can instead be approximated by \begin{align*}
		r_{m, \pm} = 1 + s_m/2 \pm \sqrt{s_m} \sqrt{1 + s_m/4} &\approx 1 + s_m/2 \pm \sqrt{s_m} \\ 
		&\approx 1 \pm \sqrt{s_m}.
	\end{align*}
	Next, we rewrite $f_m$, yielding 
	\begin{equation*}
		f_m = \frac{e^{n_m \ln(r_{m,+})}-e^{n_m  \ln(r_{m,-})} }{\left(e^{(n_m+1) \ln(r_{m,+})} - e^{(n_m+1)\ln(r_{m,-})}\right) + (e^{(n_m-1)\ln(r_{m,+})} - e^{(n_m-1)\ln(r_{m,-})})}.
	\end{equation*} 
	We insert the approximation of $r_{m,\pm}$ into $\ln(r_{m,\pm})$ and do a first order Taylor expansion in $\sqrt{s_m}$ yielding 
	\begin{equation*}
		\ln(r_{m,\pm}) \approx -\pm \sqrt{s_m}.
	\end{equation*} 
	Inserting this into $f_m$ and simplifying yields 
	\begin{align*}
		f_m \approx \frac{(e^{n_m\sqrt{s_m}} - e^{-n_m\sqrt{s_m}})}{(e^{n_m\sqrt{s_m}} + e^{-n_m\sqrt{s_m} }) (e^{\sqrt{s_m}} - e^{\sqrt{s_m}})}
		= \frac{\tanh\left(n_m \sqrt{s_m}\right)}{ e^{\sqrt{s_m}} - e^{\sqrt{s_m}}}.
	\end{align*} 
	 Approximating the exponential terms by $e^{\pm\sqrt{s_m}} \approx 1\pm\sqrt{s_m}$, yields
	 	\begin{align*}
	 	f_m &\approx \frac{\tanh\left(n_m \sqrt{s_m}\right)}{1 + \sqrt{s_m} - 1 + \sqrt{s_m}} = \frac{\tanh\left(n_m \sqrt{s_m}\right)}{2 \sqrt{s_m}}.
	 \end{align*} 
	Inserting the approximation of $f_m$ into $\hat{\rho}_{FD}^{IE}$ and simplifying yields the result.
\end{proof}

\subsection{Discussion of Theorem \ref{thm:assymptoticsRhoFD}}

Theorem \ref{thm:assymptoticsRhoFD} provides insight into the dependence of the estimate \eqref{eq:fullyDiscEst} on the material parameters, domain lengths and discretizations. First, note that $\left|s_m\right|$ is bounded by 
\[\left| s_m \right| = \left| \frac{\alpha_m \left(1 - e^{-r \Delta t - i \omega}\right) \Delta x_m^2}{2\lambda_m \Delta t}\right| \leq \frac{\alpha_m \Delta x_m^2}{\lambda_m \Delta t}=:\frac{1}{c_m}.\]
The number on the right hand side is the inverse of the CFL number chosen in the respective solver, which we now denote by $c_m$. We thus distinguish the two cases of the CFL numbers $c_m$ being small or large in Remarks \ref{rem:small} and \ref{rem:large}, respectively. 

\begin{remark}\label{rem:small}
	For $c_m\gg 1$, $m=1,2$, the spatial discretization does not affect the convergence behaviour: If $c_m\gg 1$  then $\left| s_m\right| \ll 1$, thus Theorem \ref{thm:assymptoticsRhoFD} yields that $\hat{\rho}_{FD}^{IE}$ is close to $\hat{\rho}^{IE}$, and hence that the fully discrete estimate~\eqref{eq:fullyDiscEst} is close to the time discrete estimate~\eqref{eq:timeDiscEst}.
\end{remark}
\begin{remark}\label{rem:large}
	If $c_m \ll 1$, then we expect the fully discrete estimates to converge linearly with convergence factor 
	\begin{align}\label{eq:approx}
		\begin{aligned}
			\inf_{r \in \mathbb{R}^+} e^{rT} \max_{\omega \in [-\pi,\pi]} | \hat{\rho}^{IE}_{FD}(r \Delta t + i \omega)^k| &\approx \inf_{r \in \mathbb{R}^+} e^{rT} \max_{\omega \in [-\pi,\pi]}  \left|1 - \theta - \theta \frac{\alpha_1 \Delta x_1}{\alpha_2 \Delta x_2}\right|^k\\
			&= \left|1 - \theta - \theta \frac{\alpha_1 \Delta x_1}{\alpha_2 \Delta x_2}\right|^k,
		\end{aligned}
	\end{align}
	where we used Theorem \ref{thm:assymptoticsRhoFD} to obtain the approximation. This implies that space discretization plays an important role. 
\end{remark}

Note that $c_m$ being small or large does not necessarily imply that the discretizations are fine or coarse. First, $\alpha_m/\lambda_m$ can be large or small, and second, one can have small CFL numbers for fine discretizations.  

Theorem \ref{thm:assymptoticsRhoFD} also explains how $\hat{\rho}_{FD}^{IE}$ relates to the fully discrete one time step estimates from \cite{monbir:18}. In particular, if $c_m$, $m=1,2$, is small, then for large frequencies $\omega$, $\hat{\rho}_{IE}^{FD} \approx 1 - \theta - \theta \frac{\alpha_1 \Delta x_1}{\alpha_2 \Delta x_2}$. This expression is consistent with the limit $\Delta t \to 0$ of the convergence rate for one implicit Euler time step with finite element discretization in \cite{monbir:18}. Furthermore, if $c_m$ is large, then $\hat{\rho}_{FD}^{IE} \approx \hat{\rho}^{IE}$. If in addition, $\Delta t$ is large, then \begin{align*}
	|\hat{\rho}^{IE}(r\Delta t +i\omega)| &= 1-\theta -\theta \sqrt{\frac{\alpha_1\lambda_1}{\alpha_2\lambda_2}}\frac{\tanh\left(\sqrt{\frac{\alpha_2 \frac{1-e^{-r \Delta t - i \omega}}{\Delta t}}{\lambda_2}}b\right)}{\tanh\left(\sqrt{\frac{\alpha_1 \frac{1-e^{-r \Delta t - i \omega}}{\Delta t}}{\lambda_1}}a\right)} \\ &\approx |1 - \theta - \theta \frac{\lambda_1 b}{\lambda_2 a}|,
\end{align*} since $\left(1-e^{-r \Delta t - i \omega}\right)/\Delta t$ is close to $0$ when $\Delta t$ is large independently of $r$ and $\omega$. This result is also consistent with the limit $\Delta x \to 0$ of the convergence rate for one implicit Euler time step with finite element discretizations in \cite{monbir:18}.

\subsection{Case studies}

Next, we investigate the accuracy of Theorems \ref{thm:superLinearEstFD} and \ref{thm:assymptoticsRhoFD} using a simple test case where all parameters are set to one except for $\alpha_1 = 4$ and $\theta = 1/3$. Note that the continuous estimate predicts superlinear convergence for $\theta=1/3$. 

We now plot the dependence of $|\hat{\rho}_{FD}^{IE}|$ on $\omega$ for $r=0$ in Figure \ref{fg:simpleRhoW}. 
We set $\Delta x_1 =\Delta x_2 = 0.1,0.01$ in combination with $\Delta t = 0.1,0.0001$, resulting in $c_1$ varying between $1/200$ and $5\cdot 10^4$, and  $c_2$ varying between $1/800$ and $1.25 \cdot 10^{4}$. As a comparison we also plot $ |\hat{\rho}^{IE}|$. 

\begin{figure}[t]
	\centering
	\mbox{
		\includegraphics[width=0.5\textwidth]{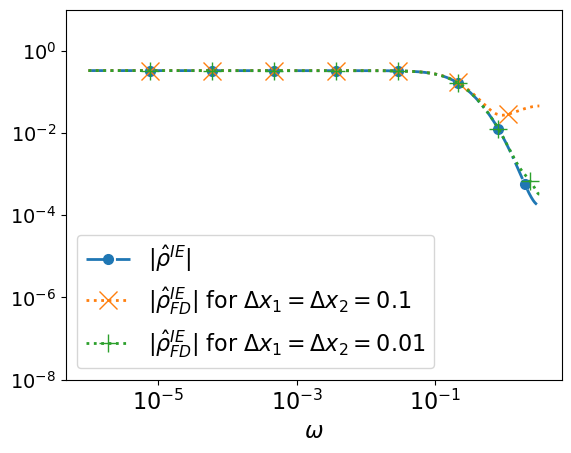}
		\includegraphics[width=0.5\textwidth]{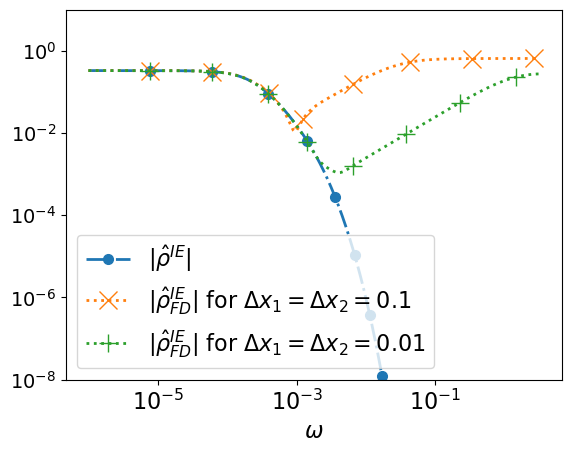}
	}
	\caption{The fully-discrete and time-discrete convergence factors dependence on $\omega$ for $r=0$ and $\theta = 1/3$ with all material parameters equal to one except for $\alpha_1 = 4$. From left to right with $ \Delta t = 0.1, 0.001$.}
	\label{fg:simpleRhoW}
\end{figure}

The left plot of Figure \ref{fg:simpleRhoW} shows the dependence of $|\hat{\rho}_{FD}^{IE}|$ on $\omega$, with $c_1 = 2.5 \cdot 10^4$ and $1 \cdot 10^{5}$ for $\Delta x = 0.01$, and $c_1 = 2.5$ and $c_2 = 10$ for $\Delta x = 0.1$. For the large CFL numbers, $|\hat{\rho}_{FD}^{IE}|$ and $|\hat{\rho}^{IE}|$ are almost identical, consistent with theorem~\ref{thm:assymptoticsRhoFD}. Decreasing the CFL number s.t. the theorem no longer applies leads to $|\hat{\rho}_{FD}^{IE}|$ deviating from $|\hat{\rho}^{IE}|$. From \eqref{def:sm}, we can infer that the values of $s_m$ will decrease first for large frequencies, which we see in the plot. 


In the right plot of Figure \ref{fg:simpleRhoW}, $c_m$ varies from $1/2$ to $1/800$ for $m=1,2$. For $\Delta x_m=0.1$, we are in the regime of small CFL numbers. Based on Theorem \ref{thm:assymptoticsRhoFD}, we therefore expect  $|\hat{\rho}_{FD}^{IE}|$ to be close to $|\hat{\rho}^{IE}|$ for small $\omega$ and to approach $|1-\theta-\theta \alpha_1 \Delta x_1/(\alpha_2 \Delta x_2)| = 2/3$ as $\omega$ becomes large, which is consistent the plot. Increasing the CFL numbers by decreasing $\Delta x_m$ to 0.01 means that theorem \ref{thm:assymptoticsRhoFD} no longer applies, and we observe that, again starting with high frequencies, the curve moves towards the semidiscrete convergence factor. 

\begin{figure}[t]
	\centering
	\mbox{
		\includegraphics[width=0.5\textwidth]{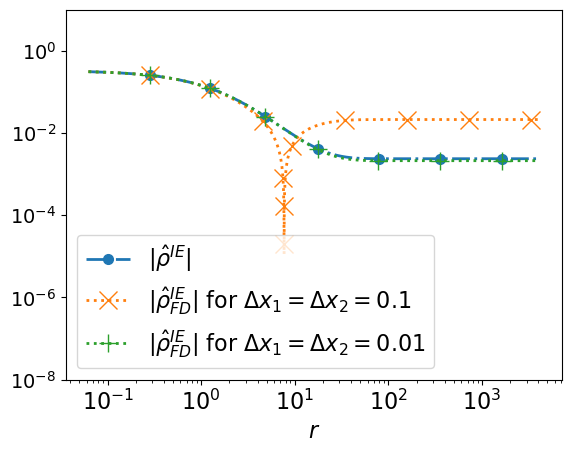}
		\includegraphics[width=0.5\textwidth]{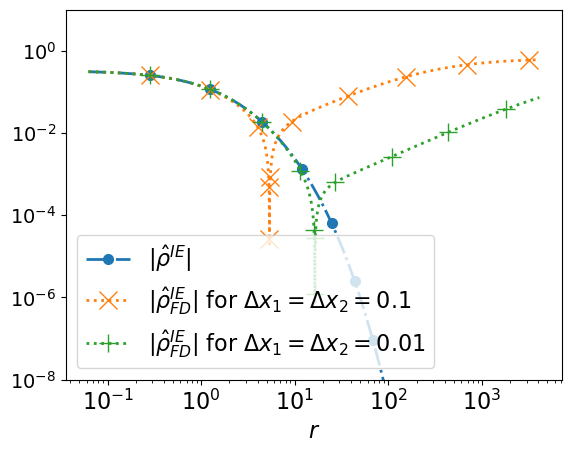}
	}
	\caption{The fully-discrete and time-discrete convergence factors dependence on $r$ for $\omega =0$ and $\theta = 1/3$ with all material parameters equal to one except for $\alpha_1 = 4$. From left to right with $ \Delta t = 0.1, 0.001$.}
	\label{fg:simpleRhoR}
\end{figure}

Figure \ref{fg:simpleRhoR}, shows the dependence of $|\hat{\rho}_{FD}^{IE}|$ on $r$. In the left plot we have again $c_1 = 2.5\cdot 10^4, c_2 = 1\cdot 10^5$ ($\Delta x=0.01$) or $2.5$ and $10$ ($\Delta x=0.1$) for $m=1,2$. Again, for the small CFL number, $|\hat{\rho}_{FD}^{IE}|$ does not agree with $|\hat{\rho}^{IE}|$, in particular for $r$ large. This is expected from theorem \ref{thm:assymptoticsRhoFD}, since for $\omega=0$, $s_m$, grows with $r$. We observe that the function has a zero in $r$, which is due to the convergence factor changing sign. 

In the right plot of Figure \ref{fg:simpleRhoW}, $\Delta t$ is again decreased to 0.001, leading to smaller CFL numbers. Based on Theorem \ref{thm:assymptoticsRhoFD}, we therefore expect  $|\hat{\rho}_{FD}^{IE}|$ to approach $|1-\theta-\theta \alpha_1 \Delta x_1/(\alpha_2 \Delta x_2)| = 2/3$ first for large $r$, which is consistent the plot.

%% file: experiments.tex
\section{Numerical experiments}\label{sec:numExp}

We now compare the continuous \eqref{eq:contEst}, time discrete \eqref{eq:timeDiscEst} and fully discrete \eqref{eq:fullyDiscEst} estimates with the observed convergence behavior of the fully discrete iteration \eqref{eq:DNWRFullyDisc}. To this end, we employ the error equations, where the boundary conditions and inital data are set to zero, except for the initial waveform error $\boldsymbol{h}^0$. We initialize $\boldsymbol{h}^0$ randomly, see ~\cite[Section 5.1]{gander} for the importance of this. To compute the error estimate, the min-max problem is solved numerically using the Nelder-Mead optimization routine from SciPy with an absolute tolerance of $10^{-6}$. 

For the cases where the iteration converges linearly, we estimate the convergence rate as the average of the ratio $\|{\bf h}^{k+1}\|_{l_2} / \|{\bf h}^{k}\|_{l_2}$ over the first five iterations.

We consider the following two test cases.
\begin{itemize}
	\item \textbf{Simple:} Material parameters $\alpha_1 = 4$ and $\alpha_2 = \lambda_1 = \lambda_2 = 1$, with unit domain lengths. We set $\theta = \theta^* = 1/3$, since the continuous estimate predicts superlinear convergence for this choice of relaxation parameter.
	
	\item \textbf{Air-steel:} A model of minimal relevance for thermal FSI, or more specifically gas quenching, where a hot piece of steel is cooled by an airstream. This test case uses realistic material parameters $\alpha = 1299$ and $\lambda = 0.0243$ for air, and $\alpha = 3.47 \times 10^6$ and $\lambda = 48.9$ for steel from \cite{monbir:18, meimonbir:23}. The domain sizes are chosen to reflect realistic scales: the steel domain has width $1.5 \cdot 10^{-3}$, while the air domain has width $0.1$. In addition, we set  $\theta = \theta^* = 0.9996$, since the continuous estimates predicts super linear convergence for this choice of relaxation parameter.
	
\end{itemize}

Note that the ratios $\alpha_1/\lambda_1$ and $\alpha_2/\lambda_2$ for case {\bf Air-Steel} are $5.3 \cdot 10^4$ and $7.0 \cdot 10^4$, respectively. 
Thus, $c_m \ll 1$ unless huge time steps are used. By Remark~\ref{rem:small}, we thus expect to observe mesh dependent linear convergence. 

In contrast, for case {\bf Simple} the ratios $\alpha_1/\lambda_1$ and $\alpha_2/\lambda_2$ are $1$ and $1/4$, respectively. Thus, $c_m$ depends strongly on the subsolvers meshes. 

We first consider test case \textbf{Simple} with the grid sizes $\Delta x_1 =\Delta x_2 = 0.01, 0.0025$ in combination with $\Delta t = 0.1, 10^{-6}$. In addition, we set $T=1$, since the continuous estimate predicts superlinear convergence for this $T$.
\begin{figure}[t]
	\centering
		\includegraphics[width=1.0\textwidth]{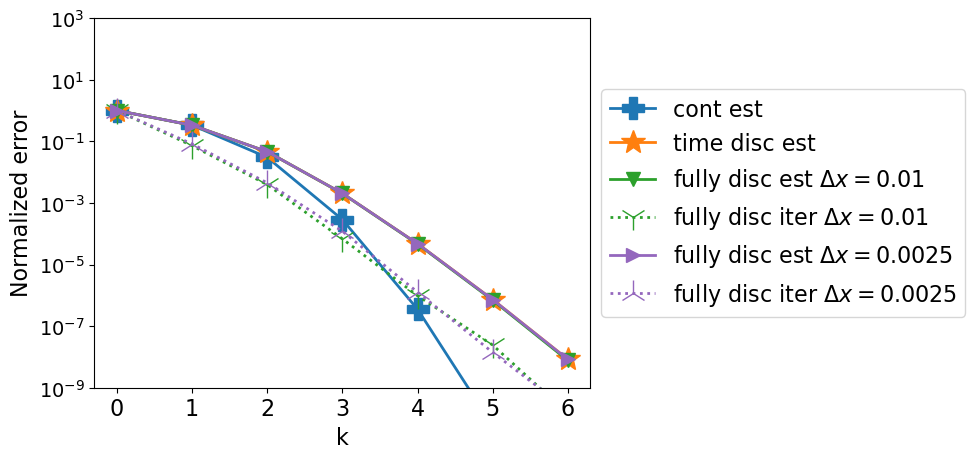}
	\caption{The normalized error $\|{\bf h}^k\|_2/\|{\bf h}^0\|$, and the continuous, time discrete and fully discrete error estimates for case \textbf{Simple}, with $T=1.0$, $\Delta t = 0.1$ and $\Delta x_1=\Delta x_2= \Delta x$.}
	\label{fg:simpleRes1}
\end{figure}

Figure \ref{fg:simpleRes1} shows the relative error $\|{\bf h}^k\|_2/\|{\bf h}^0\|$ together with the continuous, time discrete and fully discrete error estimates for the simple test case and $\Delta t = 0.1$. Comparing the relative error with the fully discrete estimate, we see that they are close. Furthermore, we see that both are almost unchanged when varying $\Delta x$. In addition, we see that the time discrete and the fully discrete estimates for $\Delta x=0.01,0.0025$ are almost identical. Both of these observations are explained by Theorem \ref{thm:assymptoticsRhoFD} and the fact that the CFL number in the subsolvers varies from $10^4$ to $6.4\cdot10^{4}$, which is large.

Additionally, we see that these two estimates have a superlinear behavior for the first few iterations, which then quickly becomes linear. This was proven for the timediscrete estimate in \cite{timeDiscArticle}. In contrast, the continuous estimate converges superlinearly and thus has to deviate from the other two after a few iterations, as also proven in  \cite[Theorem 2.5 and 2.6]{timeDiscArticle}. 

\begin{figure}[t]
	\centering
	\mbox{
		\includegraphics[width=1.0\textwidth]{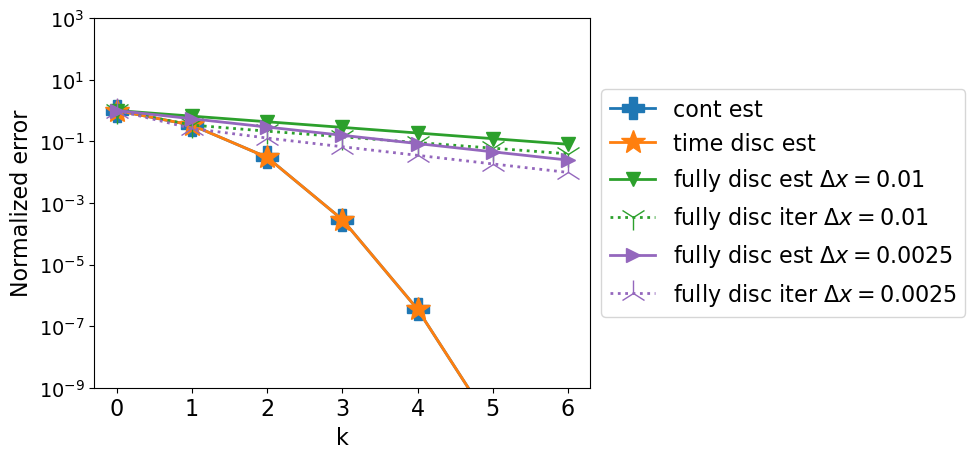}}
	\caption{The normalized error $\|{\bf h}^k\|_2/\|{\bf h}^0\|$, and the continuous, time discrete and fully discrete error estimates for case \textbf{Simple}, with $T=1.0$ and $\Delta t = 1 \cdot 10^{-6}$ and $\Delta x_1=\Delta x_2= \Delta x$.}
\label{fg:simpleRes2}
\end{figure}

In Figure \ref{fg:simpleRes2}, we decrease the time step to $\Delta t = 10^{-6}$. This causes the time discrete estimate to be almost identical to the continuous estimate, consistent with corollary 4.2 from \cite{timeDiscArticle}. We also see that the fully discrete estimate and the relative error converge linearly and that the fully discrete estimate still accurately predicts the error.

Note that decreasing the time step to $\Delta t = 10^{-6}$ reduces the subsolvers’ CFL number to $6.4$ and $0.1$. Thus, Theorem \ref{thm:assymptoticsRhoFD} no longer applies and instead we are in a transition region between the asymptotic behaviour for small and large CFL numbers. We can see that the smaller $\Delta x$ (larger CFL number) leads to the fully discrete curves getting closer to the time discrete one. Decreasing $\Delta x$ even further would lead to Theorem \ref{thm:assymptoticsRhoFD}  holding again, and the time discrete and fully discrete estimates to be close. 

Calculating the convergence rate of the fully discrete estimate and the normalized error yields $0.48$ and $0.54$ for $\Delta x = 0.0025$, respectively, and $0.60$ and $0.65$ for $\Delta x = 0.01$, respectively. Comparing with the approximate converge rate for small CFL numbers from Remark \ref{rem:large}, which for this case equals $2/3$, we see the convergence rates are close for $\Delta x = 0.01$, suggesting that we see the behaviour for small CFL numbers.

Next, we use case \textbf{Air-Steel}. We deploy a fine and a coarse grid with $\Delta x = 10^{-3}, 10^{-6}$. Both the air and the steel domain are discretized using the fine and coarse grid, yielding four different configurations. Additionally, we set $\Delta t = 10^{-3}$ and $T=1$. 

\begin{figure}[t]
	\centering
	\includegraphics[width=1.0\textwidth]{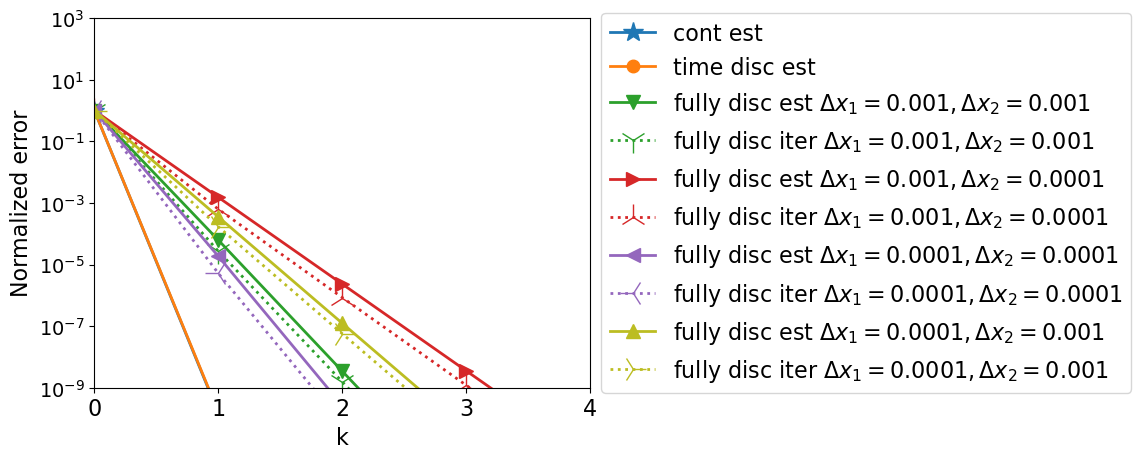}
	\caption{The fully discrete iteration and the continuous and discrete error estimate for test case {\bf Air-Steel} with $ \Delta t = 10^{-3}$ and $T=1$. }
	\label{fg:AirSteelSim}
\end{figure}

Figure \ref{fg:AirSteelSim} presents the normalized error along with the corresponding error estimates for different grid sizes $\Delta x$. We see that the continuous and time discrete estimates yield very fast convergence, reaching an error of less than $10^{-9}$ in one iteration. Comparing with the the normalized error, we see that it converges slower for all grid sizes $\Delta x$, which is accurately captured by the fully discrete estimate. 

This is explained by the large CFL numbers of both subsolvers. By Remark \ref{rem:large}, the fully discrete estimate can be approximated by \eqref{eq:approx}, and we therefore expect linear convergence with a rate depending on the grid size $\Delta x$, consistent with Figure \ref{fg:AirSteelSim}.

\begin{table}
	\begin{center}
		\begin{tabular}{ |c|c|c| } 
			\hline
			 & Conv. rate iter. &Est. conv. rate  \eqref{eq:approx}  \\ 
			 \hline
			$\Delta x_1 = 10^{-3}, \Delta x_2=10^{-3}$ & $4 \cdot 10^{-5}$ & $6 \cdot 10^{-5}$ \\ 
			\hline
			$\Delta x_1 = 10^{-3}, \Delta x_2=10^{-4}$ & $2 \cdot 10^{-3}$ & $3 \cdot 10^{-3}$ \\
			\hline
			$\Delta x_1 = 10^{-4}, \Delta x_2=10^{-3}$ & $3 \cdot 10^{-4}$ & $4 \cdot 10^{-4}$ \\ 
			\hline
			$\Delta x_1 = 10^{-4}, \Delta x_2=10^{-4}$ & $3 \cdot 10^{-5}$ & $6 \cdot 10^{-5}$ \\ 
			\hline
		\end{tabular}
	\end{center}
	\caption{ The convergence rate of the iteration and the estimated convergence rate \eqref{eq:approx} for test case {\bf Air-Steel} with $ \Delta t = 10^{-3}$ and $T=1$.  \label{tab:convRates}}
\end{table}

Comparing the convergence rates of the iteration with the estimated convergence rate \eqref{eq:approx} for the different meshes in Table \ref{tab:convRates}, we see that the estimated convergence rates are close, with the largest difference being a factor $2$ for $\Delta x_1 = \Delta x_2 = 10^{-4}$. This suggest that for realistic parameters and mesh sizes, where the CFL numbers $c_m$ are small, we observe linear mesh dependent convergence with the convergence rates being close to the estimates \eqref{eq:approx}.